\theoremstyle{plain}
	\newtheorem{theorem}{Theorem}[section]
	\newtheorem{proposition}[theorem]{Proposition}
	\newtheorem{lemma}[theorem]{Lemma}
	\newtheorem{cor}[theorem]{Corollary}
\theoremstyle{definition}
	\newtheorem{definition}[theorem]{Definition}
	\newtheorem{example}[theorem]{Example}
	\newtheorem{remark}[theorem]{Remark}
\newcommand{\cF}{\mathcal{F}}
\newcommand{\bbC}{\mathbb C}
\newcommand{\bbP}{\mathbb P}
\newcommand{\bbZ}{\mathbb Z}
\newcommand{\bbQ}{\mathbb Q}
\newcommand{\bbR}{\mathbb R}
\title[The perfect cone compactification]{The perfect cone compactification of quotients of type IV domains}
\author{Luca Giovenzana}
\address{Department of Mathematical Sciences, Loughborough University,}
\email{l.giovenzana@lboro.ac.uk}
\begin{document}
\thispagestyle{empty}

\begin{abstract}
\begin{sloppypar}
The perfect cone compactification is a toroidal compactification which can be defined for locally symmetric varieties.
Let $\overline{D_{L}/\widetilde{O}^{+}(L)}^{p}$ be the perfect cone compactification of the quotient of the type IV domain $D_{L}$ associated to an even lattice $L$. In our main theorem we show that the pair ${ (\overline{D_{L}/\widetilde{O}^{+}(L)}^{p}, \Delta/2) }$ has klt singularities, where $\Delta$ is the closure of the branch divisor of ${ D_{L}/\widetilde{O}^{+}(L) }$.
\end{sloppypar}
In particular this applies to the perfect cone compactification of the moduli space of {$2d$-polarised} $K3$ surfaces with ADE singularities when $d$ is square-free.
\end{abstract}

\makeatletter
\@namedef{subjclassname@2020}{
	\textup{2020} Mathematics Subject Classification}
\makeatother

\subjclass[2020]{14J10, 14J28}

\maketitle

\section{Introduction}
In \cite{SB06} and \cite{SBA16} Shepherd-Barron and Armstrong found a canonical model in the sense of the minimal model programme for the moduli space of principally polarised abelian varieties of dimension $g$ for $g\geq 12$. Namely they showed that the perfect cone  (or 1st Voronoi) compactification $\overline{A_g}^{p}$ has canonical singularities and that its canonical bundle is ample.
It is natural to ask whether a canonical model can be found as well for quotients of type IV domains by the action of an arithmetic group.
The analogue of the perfect cone compactification seems to be a good candidate. Let $L$ be an even lattice of signature $(2,n)$ and $D_{L}$ the associated hermitian symmetric domain of type IV. We consider the quotient $\cF_L(\widetilde{O}^+(L)):= D_{L}/\widetilde{O}^+(L))$ and we study the singularities of its perfect cone compactification $\overline{\cF_L(\widetilde{O}^+(L))}^{p}$. 
Under certain assumptions on the reflections in $\widetilde{O}^+(L)$ that fix a primitive isotropic vector in $L$ we are able to prove that the pair $(\overline{\cF_L(\widetilde{O}^+(L))}^{p}, \frac{1}{2}\Delta)$ has klt singularities, where $\Delta$ is the closure of the branch divisor of $D_{L}\to \cF_L(\widetilde{O}^+(L))$.
We recall that klt singularities form an important class in the study of higher dimensional algebraic varieties, in particular if $(X,D)$ is a klt pair, then $X$ has rational singularities. 
In section \ref{mod.sp.K3} we apply our result to the perfect cone compactification of the moduli space of $2d$-polarised $K3$ surfaces with ADE singularities when $d$ is square-free.

\section{Compactifications} 
Finding compactifications of locally symmetric varieties is a classical problem in geometry which has been extensively studied. We start by recalling the construction of the Baily-Borel and of the toroidal compactifications with the focus on the setting of our main theorem. Our main references are \cite{AMRT} and \cite{GHS13}.

\subsection{Baily-Borel and toroidal compactifications} \label{comps}
The Baily-Borel compactification and toroidal compactifications are all defined in general for quotients of Hermitian symmetric spaces by the action of an arithmetic subgroup of their automorphism group.
We treat quotients of type IV domains.
Let $L$ be an even lattice of signature $(2,n)$ with $n\geq 1$ and let 
\begin{align*}
D_L=\{[x]\in\bbP(L\otimes \bbC)\ |\ (x,x)=0\ (x,\bar{x})>0 \}^+
\end{align*}
(where ``+'' denotes a connected component) be the associated Hermitian symmetric domain of type IV.
Let $O^+(L)$ denote the subgroup of $O(L)$ preserving $D_L$ and let $D_L^{\bullet}$ denote the affine cone over $D_L$. We consider modular forms:

\begin{definition}
Let $n\geq 3$, $k\in \bbZ$, $\Gamma< O^+(L)$ be a subgroup of finite index and
$\chi:\Gamma\to\bbC^*$ be a character. A holomorphic function $F:D_L^{\bullet}\to\bbC$ is called a modular form of weight $k$ and character $\chi$ for the group $\Gamma$ if:
\begin{align*}
F(tZ)=t^{-k}F(Z)\ \forall t\in\bbC^*\\
F(gZ)= \chi(g)F(Z)\ \forall g\in \Gamma.
\end{align*}
\end{definition}
In the case $n<3$ an analogous definition would not imply the holomorphicity of $F$ at the boundary, for more details we refer to Definition 6.4 and its discussion in \cite{GHS13}.
Let $M_k(\Gamma, 1$) denote the space of weight $k$ modular form with trivial character. The Baily-Borel compactification of the quotient $\cF_L(\Gamma):=D_L/\Gamma$ can be defined as:
\begin{align*}
\overline{D_L/\Gamma}^{BB}:=\mathrm{Proj}\left(\oplus_k M_k(\Gamma, 1)\right).
\end{align*}

The Baily-Borel compactification is set-theoretically well understood, indeed the following holds:
\begin{theorem}\label{BBforFLGamma}(\cite{GHS13} Theorem 5.5)
The Baily-Borel compactification $\overline{\cF_L(\Gamma)}^{BB}$ decomposes as:
\begin{align*}
\overline{\cF_L(\Gamma)}^{BB} = \cF_L(\Gamma)\sqcup \bigsqcup_{\pi} X_{\pi}\sqcup\bigsqcup_{l}Q_l
\end{align*}
where $l$ and $\pi$ run through representatives of the finitely many $\Gamma$-orbits of isotropic lines and isotropic planes in $L\otimes\bbQ$ respectively. Each $X_{\pi}$ is a modular curve, each $Q_l$ is a point, and $Q_l$ is contained in the closure of $X_{\pi}$ if and only if the representatives may be chosen so that $l\subset \pi$.
\end{theorem}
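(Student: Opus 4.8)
The plan is to deduce the statement from the general theory of the Baily--Borel compactification together with the explicit description of the rational boundary components of a type IV domain. Recall that $\overline{\cF_L(\Gamma)}^{BB}$ is built as $D_L^{*}/\Gamma$, where $D_L^{*} = D_L \cup \bigcup_F F$ is the union of $D_L$ with its rational boundary components $F$, endowed with the Satake topology, and where the finite-index arithmetic group $\Gamma$ permutes the boundary components. Thus the first task is to classify the rational boundary components of $D_L$, and the second is to read off how their $\Gamma$-orbits are glued, i.e. the closure relations.

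For the classification I would use the standard bijection, valid for the orthogonal group $O^+(L)$, between rational boundary components of $D_L$ and nonzero isotropic subspaces of $L\otimes\IQ$; the maximal rational parabolic subgroups are precisely the stabilisers of such subspaces. Since $L$ has signature $(2,n)$ its real Witt index is $2$, hence its rational Witt index is at most $2$ and the isotropic subspaces have dimension $1$ or $2$: these are the lines $l$ and planes $\pi$ of the statement. To identify the associated components I would pass to a tube-domain realisation. Splitting off a hyperbolic plane $U=\langle e,f\rangle$ containing a chosen isotropic line $l=\langle e\rangle$, one writes $L\otimes\IQ = U\oplus L_0$ with $L_0$ of signature $(1,n-1)$ and identifies $D_L$ with the tube domain $L_0\otimes\IR + iC$ over the positive cone $C\subset L_0\otimes\IR$. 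In these coordinates the boundary component attached to $l$ is the cusp of the cone, a single point $F(l)$, whereas an isotropic plane $\pi\supset l$ corresponds to a rational isotropic ray on $\partial C$, whose associated boundary component is a half-plane, i.e. a copy of $\mathbb{H}$.

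Passing to the quotient, the stabiliser $\Gamma_\pi<\Gamma$ acts on $F(\pi)\cong\mathbb{H}$ through an arithmetic subgroup of $\mathrm{SL}_2$, so that $X_\pi = F(\pi)/\Gamma_\pi$ is a modular curve, while the zero-dimensional components descend to points $Q_l$. Finiteness of the index set is exactly the finiteness, guaranteed by reduction theory for arithmetic groups, of the number of $\Gamma$-orbits of rational isotropic lines and planes. Finally, for the incidence relation I would use that in the Satake topology $F(l)$ lies in the closure of $F(\pi)$ precisely when $l\subset\pi$ — downstairs the point over the cusp of the cone lies in the closure of the half-plane over each rational boundary ray of $C$ — and that closures are compatible with the $\Gamma$-quotient; hence $Q_l\subset\overline{X_\pi}$ if and only if representatives can be chosen with $l\subset\pi$.

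The main obstacle, and the only place where genuine type IV input is required beyond quoting the general Baily--Borel machinery, is the precise analytic identification in the tube-domain model: verifying that the unipotent radical of the parabolic acts so that a line yields a point and a plane yields a half-plane, and that the Satake topology produces exactly the stated closure relations. Keeping track of the Levi and unipotent parts of the two conjugacy classes of maximal parabolics, and of the image of $\Gamma_\pi$ in $\mathrm{SL}_2(\IR)$, is the computational heart of the argument; everything else becomes formal once the dictionary between boundary components and rational isotropic subspaces is in place.
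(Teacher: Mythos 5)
Your proposal is correct, but note that the paper does not prove this statement at all: it is quoted verbatim from \cite{GHS13} (Theorem 5.5), so there is no internal proof to compare against. Your sketch --- the dictionary between rational boundary components and nonzero isotropic subspaces of $L\otimes\IQ$ (necessarily of dimension $1$ or $2$ by the signature), the tube-domain identification of a line with a point component and a plane with a copy of the upper half-plane, finiteness of $\Gamma$-orbits from reduction theory, and the closure relations in the Satake topology descending to the quotient --- is precisely the standard argument given in that reference (going back to Baily--Borel, and worked out for type IV domains by Scattone), so it matches the source the paper relies on.
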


The curves $X_{\pi}$ and the points $Q_l$ are called cusps.
We introduce now toroidal compactifications. In comparison with the Baily-Borel compactification they have the advantage that the boundary consists of a divisor; on the other hand they are not unique: they depend on the choice of certain fans.

Let $F$ be a cusp and $f\subseteq L\otimes \bbQ$ an isotropic space in the $\Gamma$-orbit corresponding to $F$. To every such $f$ we associate the stabiliser group $N(f)\subseteq O^+(L_{\bbR})$ of the isotropic space $f$ and the centre of the unipotent radical $U(f)\subseteq N(f)$. The group $U(f)$ has the structure of a vector space and $U(f)\cap\Gamma$ is a lattice in it. For every such $f$ we will consider in the next section an open convex cone $C(f)\subseteq U(f)$ (see Theorem III.4.1 in \cite{AMRT} for the definition of $C(f)$). Let $\overline{C(f)}^r$ be the union of $C(f)$ and the rational rays lying in its closure.
For every representative $f$ of a cusp let $\Sigma(f)$ be a fan whose support is  $\overline{C(f)}^r\subseteq U(f)$ and which is invariant under the action of the group $N(f)_{\Gamma}:=N(f)\cap\Gamma\subseteq O^+(L)$; we denote by $X_{\Sigma(f)}$ the associated toric variety. The domain $D_L$ can be embedded in an ambient space $D(f)$ and via this embedding we consider the compactification $\left(D_L/(U(f)\cap\Gamma)\right)_{\Sigma(f)}$ of the quotient $D_L/(U(f)\cap\Gamma)$ inside an $X_{\Sigma(f)}$-fibre bundle over $F$ (a toric variety in the case $F$ is a 0-dimensional cusp $Q_l$).
If such toric varieties are given by a $\Gamma$-admissible collection of polyhedra $\Sigma$ in the sense of Definition III.5.1 of \cite{AMRT}, their quotients by the induced action of $\Gamma$ glue to give a compactification of $\cF_L(\Gamma)$. In greater detail, let $\widetilde{D_L}^{\Gamma}=\sqcup_f \left(D_L/(U(f)\cap\Gamma)\right)_{\Sigma(f)}$ denote the union of the partial compactifications defined by the admissible collection of  polyhedra $\Sigma$, where the union is taken over the set of all representatives $f$ of cusps.
Then the toroidal compactification $\overline{\cF_L(\Gamma)}^{\Sigma}$ is the quotient of $\widetilde{D_L}^{\Gamma}$ by a certain equivalence relation $\Lambda$ induced by $\Gamma$. The definition and analysis of the relation $\Lambda$ is the subject of section III.6 of \cite{AMRT}. Thanks to Theorem III.5.2 of \cite{AMRT} the quotient $  \overline{\cF_L(\Gamma)}^{\Sigma}$ has the structure of a complex algebraic space.
The compactification $\overline{\cF_L(\Gamma)}^{\Sigma}$ comes with a surjection onto the Baily Borel compactification and analytic neighbourhoods of points lying over 0-cusps can be described as follows. Let $l\subseteq L$ be an isotropic line and $Q_{l}$ be the associated cusp.
The group $N(l)$ acts on the toric variety $X_{\Sigma(l)}$ preserving the partial compactification of $D_{L}/(U(l)\cap\Gamma)$. The quotient of the latter is isomorphic to an analytic open subset of the toroidal compactification $\overline{\cF_L(\Gamma)}^{\Sigma}$ containing the points lying over the 0-cusp $Q_{l}$.

When $\Gamma$ does not act freely it is meaningful to consider a neat normal subgroup of finite index: indeed given such a subgroup $\Gamma'\unlhd \Gamma$, a $\Gamma$-admissible collection of polyhedra $\Sigma$ is also $\Gamma'$-admissible. The finite group $\Gamma/\Gamma'$ acts on the associated toroidal compactification $\overline{\cF_L(\Gamma')}^{\Sigma}$ and has quotient $ \overline{\cF_L(\Gamma)}^{\Sigma}$.

Our main result concerns the quotient of $D_L$ by the action of a particular group that we now introduce. Recall that a non-degenerate lattice $\Lambda$ naturally injects in its dual $\Lambda^{\vee}:=\mathrm{Hom}_{\bbZ}(\Lambda, \bbZ)$ and that the quotient $A_{\Lambda}:=\Lambda^{\vee}/\Lambda$ is a finite group. The group $A_{\Lambda}$ is called the discriminant group. The pairing of $\Lambda$ induces a pairing on the dual with values in $\bbQ$ which in turn gives a $\bbQ/\bbZ$-valued form on $A_{\Lambda}$. If $\Lambda$ is even the form on $\Lambda^{\vee}$ yields:
\begin{align*}
q:A_{\Lambda}\to \bbQ/2\bbZ
\end{align*}
We denote by O$(A_{\Lambda})$ the group of automorphisms of $A_{\Lambda}$ preserving this form.
Our main result concerns the group
\begin{align*}
\widetilde{O}^+(L):=\ker\left( O^+(L)\to O(L^{\vee}/L) \right).
\end{align*}
We recollect some results contained in \cite{Ma2018}, Appendix A:
\begin{proposition}\label{MaAppendix}
\begin{sloppypar}
Given an isotropic line $l\subset L$, there is an isomorphism of lattices:
\begin{align*}
U(l)\cap \widetilde{O}^+(L) \simeq l^{\perp}/l.
\end{align*}

The action of $N(l)$ on the variety $X_{\Sigma(l)}$ factors via the quotient ${ \overline{N(l)}:=\frac{N(l)\cap\Gamma}{U(l)\cap\Gamma} }$, which is a subgroup of the orthogonal group O$(l^{\perp}/l)$.
\end{sloppypar}
\end{proposition}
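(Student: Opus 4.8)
The plan is to reduce both assertions to an explicit description of the center of the unipotent radical by means of Eichler transvections, reproducing the computations of \cite{Ma2018}. Fix a primitive isotropic generator $e$ of $l$ and, for every $a\in l^{\perp}=e^{\perp}\cap L$, consider the transvection
\begin{align*}
E(e,a)(x)=x-(x,e)\,a+(x,a)\,e-\tfrac{1}{2}(a,a)(x,e)\,e .
\end{align*}
Since $L$ is even, $\tfrac12(a,a)\in\ZZ$, and a direct check shows that $E(e,a)$ is an isometry preserving $L$ and the component $D_L$, hence $E(e,a)\in O^{+}(L)$. First I would verify the composition law $E(e,a)\circ E(e,b)=E(e,a+b)$, so that $a\mapsto E(e,a)$ is a homomorphism; evaluating $E(e,a)=\mathrm{id}$ on a vector $f$ with $(e,f)=1$ forces $a\in\langle e\rangle$, so its kernel is exactly $l$. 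This produces an injective homomorphism $l^{\perp}/l\hookrightarrow O^{+}(L)$.

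Next I would show that the image lands in $\widetilde O^{+}(L)$ and fills out $U(l)$. For the first point, for any $x\in L^{\vee}$ the correcting terms in $E(e,a)(x)-x$ are integral multiples of $a\in L$ and of $e\in L$, because $(x,e),(x,a)\in\ZZ$ when $x\in L^{\vee}$; hence $E(e,a)$ acts trivially on $A_L=L^{\vee}/L$ and lies in $\widetilde O^{+}(L)$. For the second point I would use the tube-domain realisation of $D_L$ at the $0$-dimensional cusp $Q_l$: the transvections act as the translations $z\mapsto z+a$ of the tube domain over the positive cone in $(l^{\perp}/l)\otimes\IR$, they are central unipotent elements of $N(l)$, and they span a subgroup of real dimension $n=\mathrm{rk}(l^{\perp}/l)$. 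By the structure of the parabolic at the $0$-cusp this dimension equals $\dim U(l)$, so the transvections exhaust $U(l)$; intersecting with $\widetilde O^{+}(L)$ then gives the claimed lattice isomorphism $U(l)\cap\widetilde O^{+}(L)\simeq l^{\perp}/l$.

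For the second assertion I would first note that every $g\in N(l)$ stabilises $l$ and hence $l^{\perp}$, that the form descends to $l^{\perp}/l$, and that $g$ preserves it; this yields a homomorphism $N(l)\to O(l^{\perp}/l)$. A one-line computation gives $E(e,a)(x)=x+(x,a)\,e\equiv x\pmod{l}$ for $x\in l^{\perp}$, so $U(l)\cap\Gamma$ lies in its kernel. Moreover $U(l)\cap\Gamma$ is precisely the lattice defining the torus $T=U(l)_{\IC}/(U(l)\cap\Gamma)$ of $X_{\Sigma(l)}$, so it acts trivially on $X_{\Sigma(l)}$; hence the $N(l)\cap\Gamma$-action factors through $\overline{N(l)}=(N(l)\cap\Gamma)/(U(l)\cap\Gamma)$, essentially by construction of the torus embedding. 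It remains to see that $\overline{N(l)}\to O(l^{\perp}/l)$ is injective, i.e. that the kernel of $N(l)\cap\Gamma\to O(l^{\perp}/l)$ is no larger than $U(l)\cap\Gamma$.

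I expect this last kernel computation to be the delicate step, because over $\IR$ the kernel of $N(l)\to O(l^{\perp}/l)$ is strictly larger than $U(l)$, containing also the scalings $e\mapsto\lambda e$ of the Levi factor; integrality is what cuts these down. Concretely, if $g\in N(l)\cap\Gamma$ acts trivially on $l^{\perp}/l$ then, since $e\in l^{\perp}$, one gets $g(e)\in\langle e\rangle$, and primitivity of $e$ forces $g(e)=\pm e$. Writing the action of $g$ on $L\otimes\IR=\IR e\oplus\IR f\oplus(l^{\perp}/l)\otimes\IR$ and imposing invariance of the form, the case $g(e)=e$ yields exactly a transvection $E(e,a)$, whereas the case $g(e)=-e$ reverses the orientation of a positive-definite $2$-plane and is therefore excluded from $O^{+}(L)\supseteq\Gamma$. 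Hence the kernel equals $U(l)\cap\Gamma$ and $\overline{N(l)}\hookrightarrow O(l^{\perp}/l)$. The careful bookkeeping here, together with invoking the parabolic and Levi structure of \cite{AMRT} to identify $U(l)$ with the transvections, is the part I would treat most cautiously.
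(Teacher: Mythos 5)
The paper offers no proof of this proposition at all—it simply cites \cite{Ma2018}, Appendix A—and your Eichler-transvection argument is precisely the standard computation carried out in that reference, so your proposal is correct and follows essentially the same route the paper relies on. The only step you gloss is the reverse inclusion $U(l)\cap\widetilde{O}^{+}(L)\subseteq\{E(e,a)\ :\ a\in l^{\perp}\}$ (needed for surjectivity in the lattice isomorphism, and exactly where $\widetilde{O}^{+}$ differs from $O^{+}$ when $\mathrm{div}(l)>1$): it follows in one line by pairing against some $y\in L^{\vee}$ with $(y,e)=1$, which exists since $e$ is primitive and the pairing $L^{\vee}\times L\to\ZZ$ is perfect, because then $E(e,a)(y)-y\equiv -a \bmod \IR e$ must lie in $L$, forcing the class of $a$ in $l^{\perp}/l$ to be integral.
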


\subsection{The perfect cone compactification} \label{perfcomp}
In this paragraph we introduce the toroidal compactification that is the object of our main result, the perfect cone compactification.

A cone in a real finite-dimensional vector space $V$ is a set closed under multiplication by a positive scalar. Given an open cone $C\subseteq V\setminus\{0\}$ whose closure does not contain any line, we denote by $\overline{C}^*\subseteq V^*$ the linear forms that are non-negative on $C$; we define the dual cone $C^*$ to be the interior of $\overline{C}^*\setminus \{0\}$. We say that a cone is self-adjoint if there is a positive definite form on $V$ whose induced isomorphism between $V$ and $V^*$ transforms $C$ in $C^*$. A self-adjoint cone is necessarily convex. We say that $C$ is homogeneous if the group of automorphisms of $V$ preserving the cone $C$ acts transitively on $C$.

We assume that $V\simeq L\otimes \bbR$ for a finitely generated free abelian group $L$, in particular $V$ has a rational structure. Let $G$ be an arithmetic subgroup of the automorphism group $\mathrm{Aut}(V,C)$ of $V$ fixing the cone $C$. Recall that a rational polyhedral cone $\sigma\subseteq V$ is a convex set of the form $\sigma=\bbR_{\geq 0}v_1+\bbR_{\geq 0}v_2+...+\bbR_{\geq 0}v_n$ for vectors $v_1,...,v_n\in L\otimes \bbQ$.

Defining a toroidal compactification reduces to the problem of finding fans for certain homogeneous cones $C$ with an action of $G$ which are $G$-admissible polyhedral decompositions in the sense of the following

\begin{definition} \label{admissible decomposition}
Given a self-adjoint homogeneous cone $C$ in a real vector space $V$ with integral structure $L\subseteq V$ and $G\subseteq\mathrm{Aut}(V,C)$ an arithmetic subgroup,
a $G$-admissible polyhedral decomposition for the cone $C$ is a family $S=\{\sigma_{\alpha}\}$ of rational polyhedral cones $\sigma_{\alpha}\subseteq \overline{C}$ such that:
\begin{itemize}
\item the family $S$ is closed under taking faces, i.e. for every $\sigma_{\alpha}\in S$, every face of $\sigma_{\alpha}$ is a cone in $S$;

\item for every pair of cones $\sigma_{\alpha},\ \sigma_{\beta}\in S$ the intersection is a face of both;

\item the group $G$ acts on $S$, i.e. for every element $g\in G$ and for every cone $\sigma_{\alpha}\in S$, $g\sigma_{\alpha}$ is a cone in $S$;

\item the family $S$ is finite modulo the action of the group $G$;

\item $C=\cup_{\alpha}(\sigma_{\alpha}\cap C)$.
\end{itemize}
\end{definition}

\begin{remark}
The reader may wonder what the relation between the support of $S$ and the rational closure of the cone $C$ is. When they coincide, the toroidal space defined by $S$ is compact.

On the one hand, if $C$ is a cone and $S$ a $G$-admissible polyhedral decomposition as in Definition~\ref{admissible decomposition}, then their union is contained in the rational closure $\overline{C}^r$, because all the cones in $S$ are rationally polyhedral.

On the other hand, given the quotient $\cF_L(\Gamma):=D_L/\Gamma$ and a $\Gamma$-admissible collection of polyhedra $\Sigma$, consisting of a $\Gamma(f)$-admissible polyhedral decomposition of $C(f)$, one for each isotropic space $f$, then the associated toroidal space $\overline{\cF_L(\Gamma)}^{\Sigma}$ is compact if and only if the equality 
\begin{align*}
\bigcup_{\Sigma(f)}\sigma = \overline{C(f)}^{r}
\end{align*}
holds for every $f$.
\end{remark}

Finally we recall Corollary II.5.23 of \cite{AMRT}
\begin{cor}
Let  $C$ be a self-adjoint homogeneous cone in a real vector space $V$ with integral structure $L\subseteq V$ and let $G\subseteq\mathrm{Aut}(V,C)$ be an arithmetic subgroup.
Taking cones over the faces of the closed convex hull of $\overline{C}\cap L\setminus\{0\}$ yields a $G$-admissible polyhedral decomposition of $C$.
\end{cor}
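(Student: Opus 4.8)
The plan is to exhibit the decomposition explicitly and then check the five conditions of Definition \ref{admissible decomposition} in turn, isolating the genuinely arithmetic input at the end. Write $P:=\overline{C}\cap L\setminus\{0\}$ for the nonzero lattice points of the closed cone and let $K$ be their closed convex hull; since $\overline{C}$ is a convex cone containing no line and $0\notin P$, the set $K$ is a closed convex subset of $\overline{C}$ whose recession cone is $\overline{C}$. For each face $F$ of $K$ put $\sigma_F:=\IR_{\geq 0}\cdot F$, the cone over $F$, and let $S:=\{\sigma_F : F \text{ a face of } K\}$, the family whose members I must show form a $G$-admissible polyhedral decomposition of $C$.

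First I would establish rationality and polyhedrality. A supporting hyperplane of $K$ meeting it in a facet has the form $\{\langle\phi,\cdot\rangle=1\}$ with $\phi$ nonnegative on $P$, hence $\phi\in\overline{C^*}$, the relevant facets being those with $\phi\in C^*$. For such $\phi$ the slab $\{x\in\overline{C}:\langle\phi,x\rangle\le 1\}$ is compact, because $\phi$ is strictly positive on $\overline{C}\setminus\{0\}$ and $\overline{C}$ contains no line; consequently $\phi$ attains its minimum over $P$ at only finitely many lattice vectors (its minimal vectors), and the corresponding facet $F$ is their convex hull, a rational polytope. Thus $\sigma_F$ is a rational polyhedral cone, and the same finiteness shows every face $F$ of $K$ is a rational polyhedron spanned by finitely many elements of $P$. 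The first two bullets of Definition \ref{admissible decomposition}, stability under faces and intersection along common faces, then follow from the elementary fact that cones over faces commute with passing to faces, together with the compatibility of the facets of $K$ as a boundary complex: two faces of a convex body meet in a common face, and taking cones preserves this.

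Next I would verify the covering condition and the equivariance. Because the recession cone of $K$ is $\overline{C}$ and $0\notin K$, for every $v\in C$ the set $\{\lambda>0 : \lambda v\in K\}$ is a half-line $[\lambda_0,\infty)$ with $\lambda_0>0$, so the ray $\IR_{\geq 0}v$ meets $\partial K$ in the single point $\lambda_0 v$, lying in the relative interior of a unique face; taking cones over the faces visible from the origin therefore yields $C=\bigcup_F(\sigma_F\cap C)$, which is the fifth bullet. For the third bullet, $G$ preserves both $C$ and $L$, hence fixes $P$ setwise and maps $K$ to itself, permuting its faces; thus $g\sigma_F=\sigma_{gF}\in S$ for every $g\in G$.

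The main obstacle is the fourth condition, finiteness of $S$ modulo $G$, which is precisely the generalisation of Voronoi's finiteness theorem for perfect forms. It suffices to treat the top-dimensional cones $\sigma_F$, equivalently the perfect forms $\phi\in C^*$ whose minimal vectors span $V$. Here I would invoke reduction theory for the arithmetic group $G$ acting on the self-adjoint homogeneous cone $C$: a Siegel-type fundamental domain meets only finitely many perfect cones, and every perfect cone has a $G$-translate meeting this domain, because the perfect cones are locally finite, each compact slab $\{\langle\phi,\cdot\rangle\le 1\}$ containing only finitely many minimal configurations. Combining local finiteness with the finiteness of the reduction domain modulo $G$ gives finitely many $G$-orbits of top-dimensional cones, and since every cone in $S$ is a face of a top-dimensional one, the whole family is finite modulo $G$. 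This last step is the technical heart, relying on the Koecher--Baily theory of self-adjoint homogeneous cones rather than on convex geometry alone.
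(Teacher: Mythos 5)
The paper offers no proof of this statement---it is quoted directly as Corollary II.5.23 of \cite{AMRT}---so your attempt must be measured against the standard Voronoi--Ash argument, and there it has a genuine gap. The problem is your assertion that ``the same finiteness shows every face $F$ of $K$ is a rational polyhedron spanned by finitely many elements of $P$.'' Your compact-slab argument proves this only for faces whose supporting functional $\phi$ lies in the \emph{open} dual cone $C^*$: only then is $\{x\in\overline{C}:\langle\phi,x\rangle\le 1\}$ compact. A face supported by $\phi\in\partial C^*$ has recession cone $\overline{C}\cap\ker\phi$ and is in general unbounded; when the boundary of $\overline{C}$ contains irrational directions (the typical situation, e.g.\ for the cones attached to type IV domains), the cone $\IR_{\geq 0}F$ over such a face need not be rational polyhedral, and need not even be closed. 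So the family $S$ of cones over \emph{all} faces of $K$, as you define it, can fail the very first requirement of Definition \ref{admissible decomposition}. This is precisely why the perfect cone decomposition is a decomposition of the rational closure $\overline{C}^r$ rather than of $\overline{C}$, and why in \cite{AMRT} the cones are defined as $\sigma(\phi)=\IR_{\geq 0}M(\phi)$, where $\phi\in C^*$ is a perfect form and $M(\phi)$ its (finite) set of minimal vectors in $P$---i.e.\ one takes cones only over the \emph{compact} faces of $K$.

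Once $S$ is restricted to cones over compact faces, your covering argument inherits the gap: the ray through $v\in C$ meets $\partial K$ in the relative interior of a unique face, but you must show that face is compact, equivalently that some supporting functional at that point can be chosen in $C^*$. This is the nontrivial Voronoi step, which your proof never supplies: one maximises $m(\phi):=\min_{x\in P}\langle\phi,x\rangle$ over the slice $\{\phi\in\overline{C^*}:\langle\phi,v\rangle=1\}$---compact precisely because $v$ is an \emph{interior} point of $C$---and shows that a maximiser $\phi_v$ satisfies $v\in\IR_{\geq 0}M(\phi_v)$ (otherwise one separates $v$ from $\mathrm{conv}(M(\phi_v))$ and perturbs $\phi_v$ to increase $m$), with $M(\phi_v)$ finite by your slab argument. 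As written, your two halves apply to two different families: polyhedrality holds only for the compact faces, while your covering argument uses all faces, and nothing bridges them. Your treatment of the remaining conditions is essentially sound: equivariance and the face conditions are the routine convex geometry you describe, and you correctly identify finiteness modulo $G$ as the arithmetic heart, resting on reduction theory for self-adjoint homogeneous cones (Siegel sets meeting finitely many perfect cones, as in \cite{AMRT}, Chapter II)---though that part too is a sketch of the known proof rather than an argument.
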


\begin{definition}
Given a self-adjoint homogeneous cone $C$ in a real vector space $V$ with integral structure $L\subseteq V$, we refer to the decomposition given above as the perfect cone decomposition $\Sigma_{p}(C)$.
\end{definition}

We define in the following the perfect cone compactification of $\cF_L(\Gamma)$.
Thanks to Lemma 2.25 of \cite{GHS07} for every 1 dimensional cusp $X_{\pi}$ the vector space $U(\pi)$ has dimension 1, therefore the definition of a toroidal compactification does not involve choices for the fans in $C(\pi)$.

The definition of a toroidal compactification for $\cF_L(\Gamma)$ then boils down to choosing an $N(l)_{\Gamma}$-admissible decomposition of $C(l)$ for every $\Gamma$-orbit of isotropic vectors $l\in L$. We come to the following

\begin{definition}
The perfect cone compactification $\overline{\cF_L(\Gamma)}^{p} $ is the toroidal compactification corresponding to the $\Gamma$-admissible collection of polyhedra $\Sigma_p$ consisting of the perfect cone decomposition $\Sigma_{p}(l)$ of $C(l)$ for every cusp $Q_l$ of $\overline{\cF_L(\Gamma)}^{BB}$.
\end{definition}

\begin{remark}
Thanks to the criterion for projectivity stated in section IV.2.1 of \cite{AMRT} the perfect cone compactification is projective.
\end{remark}

\begin{definition}
We denote by $\Delta$ the closure in $\overline{\cF_L(\widetilde{O}^+(L))}^{p}$ of the branch divisor of $D_{L}\to \cF_L(\widetilde{O}^+(L))$.
\end{definition}

Recall that an element of finite order $r\in\mathrm{GL}(n,\bbC)$ is said to be a quasi-reflection if all but one of its eigenvalues are equal to 1 and the eigenspace associated to the eigenvalue different from 1 has dimension 1. It is said to be a reflection if it is a quasi-reflection with an eigenvalue equal to -1.

\begin{lemma}
If $r\in N(l)\cap \widetilde{O}^+(L)$ is a reflection, then its image $\bar{r}\in\overline{N(l)}$ is a reflection.
\end{lemma}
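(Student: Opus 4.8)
The plan is to realise the reflection $r$ concretely as an orthogonal reflection in a single vector, and then simply read off its induced action on $l^{\perp}/l$. Since $r\in O^+(L)$ has finite order with all eigenvalues equal to $1$ except a single eigenvalue $-1$, its $(-1)$-eigenspace $W_-$ is a line and its $(+1)$-eigenspace $W_+$ is a hyperplane; because $r$ is orthogonal these eigenspaces are mutually orthogonal and together span $L\otimes\IR$. In particular the quadratic form is nondegenerate on each of them, so a spanning vector $v$ of $W_-$ satisfies $(v,v)\neq 0$ and $r$ is the reflection $r_v\colon x\mapsto x-\frac{2(x,v)}{(v,v)}v$.

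First I would use the hypothesis $r\in N(l)$ to locate $v$ relative to $l$. As $r$ has order two it acts on the line $l$ by a scalar $\pm 1$; the value $-1$ is impossible, since it would force $l\subseteq W_-=\IR v$ and hence $v$ isotropic, contradicting $(v,v)\neq 0$ because $l$ is isotropic. Therefore $r$ fixes $l$ pointwise, so $l\subseteq W_+$ and consequently $v\in W_-\subseteq l^{\perp}$. The same observation shows $v\notin l$: an isotropic line cannot contain a vector of nonzero norm.

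Next I would pass to the quotient. By Proposition \ref{MaAppendix} the image $\bar r\in\overline{N(l)}\subseteq\mathrm{O}(l^{\perp}/l)$ is exactly the transformation induced by $r$ on $l^{\perp}/l$. Writing $\bar v$ for the class of $v$, the formula for $r_v$ descends to $\bar r(\bar x)=\bar x-\frac{2(\bar x,\bar v)}{(\bar v,\bar v)}\bar v$, since the form on $l^{\perp}$ factors through $l^{\perp}/l$ and $(\bar v,\bar v)=(v,v)\neq 0$. Thus $\bar r$ is the orthogonal reflection $r_{\bar v}$ in the nonzero vector $\bar v$, which has eigenvalue $-1$ on the line $\IC\bar v$ and eigenvalue $1$ on the hyperplane $\bar v^{\perp}$; that is, it is again a reflection.

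The only point that requires care, and the step I would regard as the crux, is ensuring that $\bar r$ does not degenerate to the identity, equivalently that $\bar v\neq 0$ in $l^{\perp}/l$. This is precisely where the isotropy of $l$ enters: a vector of nonzero norm cannot lie on the isotropic line $l$, so $v\notin l$ and $\bar v\neq 0$. Everything else is a direct unwinding of the definition of a reflection together with the description of $\overline{N(l)}$ recalled in Proposition \ref{MaAppendix}.
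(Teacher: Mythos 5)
Your proof is correct, and it reaches the conclusion by a more explicit route than the paper does. The paper's argument is abstract: since $U(l)\cap \widetilde{O}^+(L)$ is torsion free (by Proposition \ref{MaAppendix} it is isomorphic to the lattice $l^{\perp}/l$), the order-$2$ element $r$ cannot die in the quotient, so $\bar{r}$ again has order $2$; and since $\bar{r}$ pointwise fixes the image in $l^{\perp}/l$ of the hyperplane $H$ fixed by $r$, a subspace of codimension at most $1$, it must be a reflection. You instead realise $r$ concretely as $r_v$ for a vector $v$ spanning the $(-1)$-eigenspace, with $(v,v)\neq 0$, and descend the reflection formula to $l^{\perp}/l$, obtaining $\bar{r}=r_{\bar v}$ with $\bar v\neq 0$. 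Each approach has its merits. Yours makes explicit two points the paper leaves implicit: you rule out the a priori possibility that $r$ acts by $-1$ on $l$ (the paper's quotient $(H\cap l^{\perp})/l$ only makes sense as a hyperplane of $l^{\perp}/l$ once one knows $l\subseteq H$, which is exactly this point), and the nontriviality of $\bar{r}$ falls out of $(\bar v,\bar v)=(v,v)\neq 0$ together with the isotropy of $l$, rather than requiring the torsion-freeness of the unipotent lattice. The paper's argument is shorter and avoids the eigenspace bookkeeping, but your version additionally identifies $\bar{r}$ as the reflection in the explicit class $\bar v\in l^{\perp}/l$, which is finer information than the bare statement of the lemma (and is the kind of correspondence invoked later in the proof of Theorem \ref{MainThm}).
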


\begin{proof}
Since $r$ has order 2, it does not lie in the torsion free group $U(l)\cap \widetilde{O}^+(L)$ and $\bar{r}$ has order 2 too. Let $H\subseteq L$ be the hyperplane pointwise fixed by $r$, then $\bar{r}$ fixes $(H\cap l)/l$ and is therefore a reflection. 
\end{proof}

Finally we can state our main theorem:

\begin{theorem}\label{MainThm}
If every reflection $s\in \overline{N(l)}$ admits a lifting to a reflection in $N(l)\cap \widetilde{O}^+(L)$, then the pair $\left(   \overline{\cF_L(\widetilde{O}^+(L))}^{p}, \frac{1}{2}\Delta \right)$ has klt singularities.
\end{theorem}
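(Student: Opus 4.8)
The plan is to reduce the statement to the klt-ness of a single auxiliary variety, exploiting that $X:=\overline{\cF_L(\widetilde{O}^+(L))}^{p}$ is a finite quotient. First I would fix a neat normal subgroup $\Gamma'\unlhd\widetilde{O}^+(L)$ of finite index. As recalled in Section~\ref{comps} the admissible collection $\Sigma_p$ is again $\Gamma'$-admissible, so we obtain a finite Galois morphism $q\colon Y:=\overline{\cF_L(\Gamma')}^{\Sigma_p}\to X$ with group $G:=\widetilde{O}^+(L)/\Gamma'$. Writing $K_Y+D_Y=q^{*}(K_X+\tfrac12\Delta)$ and invoking the standard behaviour of log discrepancies under finite quotients (e.g.\ Proposition~5.20 in Koll\'{a}r--Mori), the pair $(X,\tfrac12\Delta)$ is klt if and only if $(Y,D_Y)$ is. The whole argument then consists in identifying $D_Y$ and in controlling the singularities of $Y$.

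Over the interior this is straightforward. Since $\Gamma'$ is neat the cover $D_L\to\cF_L(\Gamma')$ is \'etale and $\cF_L(\Gamma')$ is smooth, and the branch locus of $q$ is exactly the interior part of $\Delta$, cut out by the order-two reflections of $G$. Along each such component the ramification index is $2$, so $q^{*}(\tfrac12\Delta)$ coincides with the ramification divisor and the associated coefficient of $D_Y$ vanishes; concretely, the branched cover $g\colon D_L\to\cF_L(\widetilde{O}^+(L))$ satisfies $K_{D_L}=g^{*}(K_{\cF_L(\widetilde{O}^+(L))}+\tfrac12\Delta)$, so the smooth variety $D_L$ already witnesses that the open pair is klt.

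The substance of the proof is at the boundary. Near a point of $Y$ lying over a $0$-cusp $Q_l$ the compactification is modelled on the toric variety attached to the perfect cone decomposition $\Sigma_p(l)$ of $C(l)\subseteq U(l)$, and by Proposition~\ref{MaAppendix} the residual symmetry is the finite group $\overline{N(l)}\subseteq\mathrm{O}(l^{\perp}/l)$; it is worth noting that the identification $U(l)\cap\widetilde{O}^+(L)\simeq l^{\perp}/l$ realises $C(l)$ as a component of the positive cone of the Lorentzian space $(l^{\perp}/l)\otimes\IR$ of signature $(1,n-1)$, which renders the local toric geometry quite explicit. I would first observe that any quasi-reflection of $\overline{N(l)}$ is necessarily a genuine order-two reflection, because an orthogonal transformation, whose eigenvalues pair as $\lambda,\lambda^{-1}$, cannot possess a single nontrivial eigenvalue different from $-1$. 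The hypothesis of the theorem then guarantees that each such reflection is the image of a reflection in $N(l)\cap\widetilde{O}^+(L)$, whose mirror in $D_L$ is a component of the interior branch divisor, and whose closure is precisely the boundary divisor fixed by the reflection. Consequently every codimension-one component of the ramification of $q$ along the boundary is again contained in $\Delta$ with index $2$, the same cancellation as in the interior occurs, and one concludes that $D_Y=0$ on all of $Y$. The elements of $G$ that are not quasi-reflections fix loci of codimension at least two, hence contribute no codimension-one ramification and leave $D_Y$ unchanged.

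With $D_Y=0$ the theorem reduces to the assertion that $Y$ itself has klt singularities, and this is the step I expect to be the main obstacle. Away from the boundary $Y$ is smooth, so the issue is entirely toric: a $\IQ$-Gorenstein toric variety is klt, but the perfect cone decomposition is in general non-simplicial, so $Y$ need not be $\IQ$-factorial and the $\IQ$-Gorenstein property --- equivalently the $\IQ$-Cartierness of $K_X+\tfrac12\Delta$, without which the klt condition is not even defined --- has to be established by a direct study of the cones of $\Sigma_p(l)$ inside $(l^{\perp}/l)\otimes\IR$. Once this is in hand, the remaining estimate amounts to checking that the discrepancies of a toric resolution of $Y$ exceed $-1$. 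The hypothesis on liftable reflections is exactly what makes the reduction $D_Y=0$ valid: were some boundary reflection of $\overline{N(l)}$ not to come from a reflection of $\widetilde{O}^+(L)$, it would produce a codimension-one branch divisor of $q$ not contained in $\Delta$, $D_Y$ would acquire components of coefficient $-1$, and both the $\IQ$-Cartierness of $K_X+\tfrac12\Delta$ and the clean passage to the klt-ness of $Y$ would be lost.
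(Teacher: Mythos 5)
Your reduction hinges on the claim that $D_Y=0$ on all of $Y$, and this is where the argument breaks. The finite cover $q\colon Y=\overline{\cF_L(\Gamma')}^{\Sigma_p}\to X$ ramifies along the toroidal boundary for reasons that have nothing to do with reflections: locally over a $0$-cusp $Q_l$, $q$ factors through the toric morphism induced by the finite-index inclusion of lattices $U(l)\cap\Gamma'\subseteq U(l)\cap\widetilde{O}^+(L)$. The deck transformations of that intermediate cover act on the torus by translation by torsion points, \emph{not} through $\mathrm{GL}(N)$, so the paper's Lemma \ref{lemmaKlaus} (which you implicitly rely on when you assert that all codimension-one ramification comes from reflections) does not exclude torus-invariant ramification divisors here --- and such ramification genuinely occurs, with index along the divisor of a ray $\rho$ equal to the index of $\rho\cap U(l)\cap\Gamma'$ in $\rho\cap U(l)\cap\widetilde{O}^+(L)$ (the higher-dimensional analogue of cusp width: already $X(\Gamma(N))\to X(1)$ ramifies at the cusps). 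Hence $D_Y$ acquires negative coefficients along boundary divisors not contained in $q^{-1}(\Delta)$, and, worse, near the boundary $Y$ is modelled on the fan $\Sigma_p(l)$ taken with respect to the \emph{smaller} lattice $U(l)\cap\Gamma'$, for which the $\IQ$-Gorenstein and canonicity properties are not automatic, since the convex hull defining the perfect cone decomposition is taken in the bigger lattice. This is precisely why the paper does not pass to a neat cover at the $0$-cusps: it localizes analytically (klt is local in the analytic category), describes a neighbourhood of a boundary point as an open subset of $T_\sigma/G$ with $G=\mathrm{Stab}_{\overline{N(l)}}(x)\subseteq \mathrm{O}(l^{\perp}/l)$ acting through lattice automorphisms at the level $N=U(l)\cap\widetilde{O}^+(L)$, and applies Proposition \ref{klt-toric} directly; the neat cover is used only over the $1$-cusps, where the partial compactification along rays is smooth and Lemma 3.16 of \cite{Kol96} applies.

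There is a second, independent gap: even granting your reduction, the statement you reduce to --- that the perfect-cone toric models are $\IQ$-Gorenstein with canonical (hence klt) singularities --- is exactly the paper's Theorem \ref{toric varieties canonical}, proved there by an elementary convexity argument via Proposition \ref{toric}: the supporting hyperplane of a facet $F$ of $K_p=\mathrm{conv}\bigl(\overline{C}\cap L\setminus\{0\}\bigr)$ supplies the $\IQ$-Gorenstein functional, and any nonzero lattice point of $\Pi_\sigma$ lies in $\overline{C}$, hence in $K_p$, hence on $F$. You explicitly leave this step open (``the main obstacle'', ``once this is in hand''), so the proposal defers its central technical input rather than proving it. Your use of the lifting hypothesis to match the boundary branch divisor with $\Delta$ is the right idea and parallels the paper's appeal to Theorem 2.1 of \cite{GHS07}, and your interior argument is sound, but as written the proposal neither establishes the toric input nor correctly computes the discrepancy divisor $D_Y$ of the global neat cover.
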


Kawamata log terminal (klt) singularities are mild singularities that appear in the context of the minimal model programme, we refer to Definition~\ref{Def-klt} for the precise definition. The proof of Theorem~\ref{MainThm} relies on the results about singularities of quotients of toric varieties obtained in Section~\ref{QuotientsSings}, in particular see Proposition~\ref{klt-toric}.

\begin{remark}
In the next section we give a concrete example for which the theorem holds.
\end{remark}

\begin{proof}
\begin{sloppypar}
Thanks to \cite{Mat02}, Proposition 4-4-4 the notion of klt singularities is local in the analytic category, therefore we may prove the statement for an open covering in the analytic topology.

Let $\Sigma_{p}(1)$ be the fan formed by the 1-dimensional cones of $\Sigma_{p}$. The associated partial compactification $\overline{\cF_L(\widetilde{O}^+(L))}^{\Sigma_{p}(1)}$ is an open subset of $\overline{\cF_L(\widetilde{O}^+(L))}^{p}$ containing all the points lying over the 1-cusps. 
Given a neat normal subgroup $\Gamma'\leq \widetilde{O}^+(L)$ of finite index, then the partial compactification $\overline{\cF_L(\Gamma')}^{\Sigma_{p}(1)}$ is smooth. The finite group $\widetilde{O}^+(L)/\Gamma'$ acts on it with quotient ${ q\colon\overline{\cF_L(\Gamma')}^{\Sigma_{p}(1)} \to \overline{\cF_L(\widetilde{O}^+(L))}^{\Sigma_{p}(1)} }$. Thanks to Theorem 1.1 of \cite{GHS07} every component of the ramification divisor of $q$ has ramification index equal to 1 and the branch divisor coincides with the restriction of $\Delta$. Hence the pair $(\overline{\cF_L(\widetilde{O}^+(L))}^{\Sigma_{p}(1)}, \frac{1}{2}\Delta)$ has klt singularities thanks to Lemma 3.16 of \cite{Kol96}.

We are left to show that any point lying over a 0-cusp has a neighbourhood $U$ such that the pair $(U, \frac{1}{2}\Delta|_{U})$ has klt singularities.

Given a point $\bar{x}\in\overline{\cF_L(\widetilde{O}^+(L))}^{p}$ lying over the 0-cusp in $\overline{\cF_L(\widetilde{O}^+(L))}^{BB}$ associated to the isotropic vector in $l\in L$. Let $V$ denote the analytic open subset in the toric variety $X_{\Sigma_{p}(l)}$ giving the partial compactification of $D_{L}/(U(l)\cap\Gamma)$ whose quotient by $\overline{N(l)}$ is isomorphic to an open neighbourhood of $\bar{x}$.
Let $x\in V$ be a lifting of $\bar{x}$ and $G:=\mathrm{Stab}_{\overline{N(l)}}(x)$ be its stabiliser, then there exists a $G$-invariant open neighbourhood $U_{x}\subseteq V$ of $x$ such that its image in the quotient by $\overline{N(l)}$ is isomorphic to $U_{x}/G$.
Let $\sigma\in\Sigma(l)$ be the cone in whose torus orbit $x$ lies. We denote by $T_{\sigma}$ the associated toric variety.  It follows that ${ G\subseteq\mathrm{Stab}_{\overline{N(l)}}(\sigma) }$, that $G$ acts on both $(U_{x}\cap T_{\sigma})$ and $T_{\sigma}$ and thus $(U_{x}\cap T_{\sigma})/G\subseteq T_{\sigma}/G$ is open.
Let $R$ be the ramification divisor of the quotient morphism $\pi\colon T_{\sigma}\to T_{\sigma}/G$. Thanks to Proposition \ref{MaAppendix} the group $G\subseteq \mathrm{O}(l^{\perp}/l)$ acts via group morphisms on $T_{\sigma}$, hence by Proposition \ref{klt-toric} it follows that the pair $\left(  (U_{x}\cap T_{\sigma})/G,  \frac{1}{2}\pi(R)|_{(U_{x}\cap T_{\sigma})/G} \right)$ has klt singularities.

Finally, thanks to Theorem 2.1 of \cite{GHS07}, the divisor $\Delta$ is induced by reflections of $\widetilde{O}^+(L)$. Hence, if $U_{\bar{x}}\subseteq\overline{\cF_L(\widetilde{O}^+(L))}^{p}$ denotes the open neighbourhood of $\bar{x}$ isomorphic to $(U_{x}\cap T_{\sigma})/G$, because of the bijection between the reflections in $\overline{N(l)}$ and $N(l)\cap\widetilde{O}^+(L)$ we conclude that the pair $(U_{\bar{x}}, \frac{1}{2}\Delta|_{U_{\bar{x}}})$ has klt singularities and the claim is shown.
\end{sloppypar}
\end{proof}

\subsection{The moduli space of polarised K3 surfaces with ADE singularities} \label{mod.sp.K3}
We recall here the construction of the moduli space of polarised K3 surfaces with ADE singularities and explain how Theorem \ref{MainThm} can be applied to it.

A complex K3 surface is a compact connected complex manifold of dimension 2 such that
\begin{align*}
\Omega^2_X\cong \mathcal{O}_X \mbox{ and } H^1(X,\mathcal{O}_X)=0.
\end{align*}

\begin{sloppypar}
The group $H^2(X,\mathbb{Z})$, equipped with the intersection form, is an even unimodular lattice of rank 22 of signature $(3,19)$ isomorphic to ${ \mathbf{\Lambda_{K3}}:=U^{\oplus 3}\oplus E_8^{\oplus 2}(-1) }$ where $U$ is the hyperbolic lattice and $E_8$ is the positive definite lattice associated to the $E_8$ Dynkin diagram (see Proposition 1.3.5 of \cite{HuyBook}).
\end{sloppypar}

Let $L$ be a line bundle on a K3 surface $X$. Recall that $L$ is big and nef if and only if $(L.C)\geq 0$ for all closed curves $C\subset X$, and $L^2>0$ (see for example Proposition 2.61 of \cite{KM}).
A quasi-polarised K3 surface is a pair $(X,L)$ where $X$ is a K3 surface and $L$ is a primitive big and nef line bundle on it.

After we fixed an isomorphism of lattices $\varphi: H^2(X,\mathbb{Z})\stackrel{\sim}{\to} \Lambda_{K3}$, a polarisation of degree $2d$ (i.e. $L^2=2d$) gives an element $h=\varphi([L])\in\Lambda_{K3}$ whose orthogonal complement $h^{\perp}$ is isomorphic to $\mathbf{\Lambda_{2d}}:=U^{\oplus 2}\oplus E_8^{\oplus 2}(-1)\oplus\langle -2d\rangle$.
We consider the period domain:
\begin{align*}
\mathcal{D}_{2d}:=\{x\in\mathbb{P}({\Lambda_{2d}}\otimes\mathbb{C})|\ (x,x)=0 \mbox{ and } (x,\bar{x})>0\}^+
\end{align*}
where the sign ``$+$'' denotes a connected component.

Denote by
\begin{align*}
O(\Lambda_{K3},h):=\{g\in O(\Lambda_{K3})|g(h)=h\}\leq O(\Lambda_{K3})
\end{align*}
the subgroup of automorphisms of $\Lambda_{K3}$ fixing $h$. The coarse moduli space of quasi-polarised K3 surfaces $\mathcal{F}_{2d}$ is given by the quotient of $\mathcal{D}_{2d}$ by the action of $\widetilde{O}^+(\Lambda_{2d})$, i.e. the finite index subgroup of $O(\Lambda_{K3},h)$ fixing the connected component $\mathcal{D}_{2d}$.

The disadvantage of working with the moduli functor of quasi-polarised K3 surfaces is that it is not separated; therefore it is more convenient to treat the moduli functor of polarised K3 surfaces with ADE singularities, which is separated and whose coarse moduli space can be identified with $\mathcal{F}_{2d}$ (for more details see Section 5.1.4 and Remark 6.4.5 of \cite{HuyBook}).

The quotient $\mathcal{F}_{2d}$ is a quasi-projective variety of dimension 19. We can state the following:
\begin{theorem} \label{main}
Let $\overline{\mathcal{F}_{2d}}^{p}$ be the perfect cone compactification of the coarse moduli space of polarised K3 surfaces with ADE singularities and let $\Delta$ be the closure of the branch divisor of the morphism $\pi:D_{2d}\to \mathcal{F}_{2d}$.
If $d$ is square-free, the pair $(\overline{\mathcal{F}_{2d}}^{p}, \frac{1}{2}\Delta)$ has klt singularities.
\end{theorem}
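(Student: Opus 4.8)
The plan is to deduce Theorem \ref{main} directly from Theorem \ref{MainThm}, applied to the lattice $L=\Lambda_{2d}$ and the group $\widetilde{O}^+(\Lambda_{2d})$, for which $\cF_{\Lambda_{2d}}(\widetilde{O}^+(\Lambda_{2d}))=\mathcal F_{2d}$. The entire task thus becomes the verification of the hypothesis of Theorem \ref{MainThm}: for every primitive isotropic line $l\subseteq\Lambda_{2d}$, every reflection $s\in\overline{N(l)}$ lifts to a reflection in $N(l)\cap\widetilde{O}^+(\Lambda_{2d})$. By Proposition \ref{MaAppendix} the relevant datum attached to $l$ is the lattice $M:=l^{\perp}/l$ together with the subgroup $\overline{N(l)}\subseteq\mathrm{O}(M)$, so the claim is purely lattice-theoretic.

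First I would exploit the square-free hypothesis to control $M$. The discriminant group is $A_{\Lambda_{2d}}\cong\ZZ/2d\ZZ$, with $q$ generated by a class of square $-\tfrac{1}{2d}\bmod 2\ZZ$; a class of order $t$ then has square of the form $-\tfrac{2d}{t^{2}}u^{2}$ with $\gcd(u,t)=1$, which lies in $2\ZZ$ only if $t^{2}\mid d$. When $d$ is square-free this forces $t=1$, so $q$ has no nonzero isotropic class. Since $l/\mathrm{div}(l)$ defines an isotropic class of order $\mathrm{div}(l)$ in $A_{\Lambda_{2d}}$, every primitive isotropic line has divisor $1$. Consequently $\Lambda_{2d}$ splits off a hyperbolic plane containing $l$, giving $M\cong U\oplus E_8^{\oplus 2}(-1)\oplus\langle -2d\rangle$ together with a canonical $N(l)$-equivariant identification $A_{M}\cong A_{\Lambda_{2d}}$; Eichler's criterion, applicable because $\Lambda_{2d}\supseteq U^{\oplus 2}$, shows moreover that there is a single orbit of such lines, so only this $M$ must be treated.

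Next I would carry out the lifting. The equivariance of $A_{M}\cong A_{\Lambda_{2d}}$ shows that an element of $N(l)\cap\widetilde{O}^+(\Lambda_{2d})$, acting trivially on $A_{\Lambda_{2d}}$, induces an isometry of $M$ acting trivially on $A_{M}$; hence $\overline{N(l)}\subseteq\widetilde{O}^+(M)$. The crucial observation is then that the only reflections contained in $\widetilde{O}^+(M)$ are the $(-2)$-reflections: if $\sigma_{\bar v}\in\widetilde{O}^+(M)$ for a primitive $\bar v\in M$, then $\sigma_{\bar v}(x)-x=-\tfrac{2(x,\bar v)}{(\bar v,\bar v)}\bar v$ must lie in $M$ for every $x\in M^{\vee}$, and since the pairing $M\times M^{\vee}\to\ZZ$ is perfect there exists $x$ with $(x,\bar v)=1$, forcing $(\bar v,\bar v)=-2$. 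Therefore every reflection $s\in\overline{N(l)}$ equals $\sigma_{\bar v}$ for a primitive $(-2)$-vector $\bar v\in M$. Choosing any lift $v\in l^{\perp}$ of $\bar v$, the induced form gives $(v,v)=(\bar v,\bar v)=-2$, so $v$ is a primitive $(-2)$-vector in $l^{\perp}$; the reflection $\sigma_{v}$ is then a genuine reflection, lies in $\widetilde{O}^+(\Lambda_{2d})$ since $(-2)$-reflections always do, fixes $l$ because $v\perp l$, and projects to $\sigma_{\bar v}=s$ on $M$. This is the required lift, so the hypothesis of Theorem \ref{MainThm} holds and the klt conclusion follows.

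I expect the main obstacle to lie in the first step, namely establishing that square-freeness of $d$ forces every primitive isotropic line to have divisor $1$ and, correspondingly, that $A_{M}\cong A_{\Lambda_{2d}}$ $N(l)$-equivariantly. This is exactly where the hypothesis is indispensable: for non-square-free $d$ the form $q$ acquires nonzero isotropic classes, hence isotropic lines of higher divisor whose quotients $l^{\perp}/l$ carry a different discriminant form, and on these the inclusion $\overline{N(l)}\subseteq\widetilde{O}^+(M)$ can break down, allowing reflections (for instance of square $-2d$) that admit no lift to a $(-2)$-reflection. The remaining technical points, the single-orbit statement via Eichler's criterion and the equivariance of the discriminant identification, are standard but need to be checked with care.
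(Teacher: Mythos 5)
Your proof is correct, and its skeleton is the same as the paper's: both reduce Theorem \ref{main} to Theorem \ref{MainThm} for $L=\Lambda_{2d}$ by splitting off a hyperbolic plane containing any primitive isotropic vector $\ell$. The difference lies in how much is delegated to citation. The paper invokes Scattone's Theorem 4.0.1 to obtain a single $\widetilde{O}^{+}(\Lambda_{2d})$-orbit of primitive isotropic vectors (whence the splitting $\Lambda_{2d}\simeq U\oplus(\ell^{\perp}/\ZZ\ell)$) and then declares the claim a ``direct application'' of Theorem \ref{MainThm}, leaving the verification of the lifting hypothesis implicit. You instead derive $\mathrm{div}(\ell)=1$ directly from square-freeness via the discriminant form on $A_{\Lambda_{2d}}\cong\ZZ/2d\ZZ$ (no nonzero isotropic classes), which makes the one-orbit statement a convenience (recovered via Eichler) rather than a necessity, and --- more substantially --- you actually check the hypothesis of Theorem \ref{MainThm}: the equivariant identification $A_{M}\cong A_{\Lambda_{2d}}$ gives $\overline{N(\ell)}\subseteq\widetilde{O}^{+}(M)$ for $M=\ell^{\perp}/\ell$, reflections there are reflections in $(-2)$-vectors, and a primitive $(-2)$-vector $\bar v$ lifts along $\ell^{\perp}\to M$ to a $(-2)$-vector $v$ whose reflection lies in $N(\ell)\cap\widetilde{O}^{+}(\Lambda_{2d})$ and maps to $\sigma_{\bar v}$. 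This is exactly the step the paper compresses, so your version is the more self-contained argument; what the paper's route buys is brevity through the literature. Two points to tighten: your divisibility computation alone only forces $(\bar v,\bar v)\in\{\pm 1,\pm 2\}$, so you need the evenness of $M$ to get $\pm 2$ and the ``$+$''-condition to exclude $+2$ (in the signature-$(1,19)$ lattice $M$, a reflection in a positive-norm vector exchanges the two components of the positive cone, hence is not in $O^{+}(M)$); relatedly, the inclusion $\overline{N(\ell)}\subseteq O^{+}(M)$ does not follow from discriminant equivariance, but from the fact that $\overline{N(\ell)}$ preserves the cone $C(\ell)$ underlying the toroidal construction.
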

\begin{proof}
Thanks to Corollary 1.5.2 of \cite{Nik}, there is an isomorphism $O(\Lambda_{K3},h)\simeq\widetilde{O}(\Lambda_{2d})$.
Thanks to \cite{Sca87} Theorem 4.0.1, if $d$ is square free, there is one $\widetilde{O}^{+}(\Lambda_{2d})$ orbit of primitive isotropic vectors. Thus, given $\ell\in\Lambda_{2d}$ primitive isotropic there exists an element $\ell'\in\Lambda_{2d}$ giving a splitting $\Lambda_{2d}\simeq U\oplus \left(\ell^{\perp}/(\bbZ\ell)\right)$ with $\langle\ell,\ell'\rangle\simeq U$. Hence the assumptions of Theorem~\ref{MainThm} are fulfilled and the claim follows from Theorem~\ref{MainThm}.
\end{proof}

\section{Canonical singularities}
In this section we review some classes of singularities.
We review firstly the definition of canonical singularities and we recall a characterisation of canonical singularities for toric varieties that will be used in our exposition. We are also interested in a particular class of singularities of pairs, namely the Kawamata log terminal singularities.

\begin{definition}
We say that a normal $\bbQ$-Gorenstein variety $X$ over $\bbC$ has canonical 
singularities if there exists a proper birational morphism $f:X'\to X$ from a smooth variety $X'$ such that:
\begin{align*}
K_{X'} = f^*K_X + \sum_{i\in I}a_i E_i
\end{align*}
with $a_i\in\bbQ_{\geq 0}$ 
where $\{E_i\}_{i\in I}$ is the set of all irreducible prime divisors lying in the exceptional locus of $f$.
\end{definition}
If the condition $a_{i}\geq 0$ holds for a resolution, then for any proper birational morphism $g\colon V\to X$ with $V$ smooth all the coefficients of the exceptional divisors in the ramification formula $K_{V} = g^{*}K_{X} + \sum_{i}b_{i}E_{i}$ are non-negative.

Toric varieties with canonical singularities can be characterised in terms of the cones defining them.

Let $N$ be a finitely generated free abelian group and let $T_N$ be the associated torus whose lattice of 1-parameter subgroups is given by $N$; we denote by $M$ the lattice of characters of $T_N$ and by $\langle\ ,\rangle: M\times N\to \bbZ$ the natural pairing.
Let $\sigma \subset N\otimes \bbR$ be a rational convex polyhedral cone and let $U_{\sigma}$ be the associated affine toric variety. If $\rho\in \sigma(1)$ is a ray, we denote by $u_{\rho}$ the unique non-trivial primitive vector in $\rho\cap N$.
We consider the polytope $\Pi_{\sigma}:=$convex-hull$(\{0\}\cup\{u_{\rho}| \rho\in\sigma(1)\}) \subset N\otimes\bbR$.

\begin{proposition}[Proposition 11.4.12 of \cite{CLS}] \label{toric}
With the above notation,
\begin{itemize}
        \item the following are equivalent:
        \begin{itemize}
                
                \item the variety $U_{\sigma}$ is $\bbQ$-Gorenstein;
                
                \item there exists an $m\in M\otimes\bbQ$ such that $\forall \rho\in\sigma(1)$ $\langle m,u_{\rho}\rangle = 1$;
                
                \item the polytope $\Pi_{\sigma}$ has a unique facet $F$ not containing the origin.
                
        \end{itemize}

        \item If the variety $U_{\sigma}$ is $\bbQ$-Gorenstein, then the variety $U_{\sigma}$ has canonical singularities if and only if the only non-zero lattice points of $\Pi_{\sigma}$ lie in the facet not containing the origin (or, more concisely, $\Pi_{\sigma}\cap N\setminus \{0\} \subset F\cap N$).
\end{itemize}
\end{proposition}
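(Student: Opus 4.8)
The plan is to translate the statement into the standard dictionary of toric geometry relating torus-invariant Weil divisors to support functions, applied to the canonical class. Recall that on the affine toric variety $U_\sigma$ one has $K_{U_\sigma}=-\sum_{\rho\in\sigma(1)}D_\rho$, where $D_\rho$ is the prime invariant divisor attached to the ray $\rho$. A torus-invariant $\IQ$-divisor $\sum_\rho a_\rho D_\rho$ is $\IQ$-Cartier on $U_\sigma$ exactly when there is a functional in $M\otimes\IQ$ whose values on the $u_\rho$ prescribe the coefficients; for $K_{U_\sigma}$ this reads: $K_{U_\sigma}$ is $\IQ$-Cartier if and only if there exists $m\in M\otimes\IQ$ with $\langle m,u_\rho\rangle=1$ for every $\rho\in\sigma(1)$. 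This is precisely the equivalence of the first two bullets, so I would dispatch (a) $\Leftrightarrow$ (b) first.

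For (b) $\Leftrightarrow$ (c) I would argue by elementary convex geometry. If such an $m$ exists, every $u_\rho$ lies on the affine hyperplane $H=\{v:\langle m,v\rangle=1\}$, which misses the origin; hence $\Pi_\sigma$ is a pyramid with apex $0$ over the base $F=\Pi_\sigma\cap H=\mathrm{conv}\{u_\rho\}$, and every facet other than $F$ contains the apex, so $F$ is the unique facet avoiding $0$. Conversely, if $\Pi_\sigma$ has a unique facet $F$ not through the origin, I normalise its supporting functional to an $m$ with $F\subset\{\langle m,\cdot\rangle=1\}$ and $\langle m,0\rangle=0$, and then check $\langle m,u_\rho\rangle=1$ for every $\rho$. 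Here I would use that (assuming $\sigma$ strongly convex) each $u_\rho$ is a genuine vertex of $\Pi_\sigma$; were some $u_{\rho_0}\notin F$, every facet through $u_{\rho_0}$ would pass through $0$, forcing the supporting cone of $\Pi_\sigma$ at $u_{\rho_0}$ to contain the whole line $\IR u_{\rho_0}$, contradicting that the supporting cone at a vertex is pointed.

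For the canonical criterion I would use the key identity $\Pi_\sigma=\sigma\cap\{v:\langle m,v\rangle\le 1\}$, which follows by writing $v=\sum_\rho t_\rho u_\rho$ with $t_\rho\ge 0$ and noting $\langle m,v\rangle=\sum_\rho t_\rho$. Passing to a toric resolution $f\colon X'\to U_\sigma$ refining $\sigma$, the discrepancy along the divisor attached to a new primitive ray generator $u'$ equals $\langle m,u'\rangle-1$. Thus $U_\sigma$ has canonical singularities if and only if $\langle m,u\rangle\ge 1$ for every lattice point $u\in\sigma\cap N\setminus\{0\}$. Every such $u$ with $\langle m,u\rangle<1$ lies in $\Pi_\sigma$ by the identity above, while the lattice points with $\langle m,\cdot\rangle=1$ are exactly $F\cap N$; hence the canonical condition becomes $\Pi_\sigma\cap N\setminus\{0\}\subset F\cap N$, as claimed.

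The main obstacle is the last step: justifying that canonicity is detected by the toric discrepancy formula, i.e. that it suffices to compute discrepancies over toric resolutions and that these are governed linearly by $m$. This rests on the existence of toric resolutions together with the fact that, for toric varieties, the infimum of discrepancies is attained along toric divisorial valuations, each of which corresponds to a primitive lattice vector in $\sigma$; reducing the a priori infinite test over all divisorial valuations to this lattice-point condition is the one place where more than formal bookkeeping is required. The remaining pieces---the Weil divisor/support-function dictionary and the pyramid argument---are routine once strong convexity of $\sigma$ is used to guarantee that the $u_\rho$ are vertices of $\Pi_\sigma$.
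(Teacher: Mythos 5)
The paper offers no proof of this statement: it is imported verbatim as Proposition 11.4.12 of \cite{CLS}, so there is no internal argument to compare against. Your proof is correct, and it is essentially the standard textbook argument (the one in \cite{CLS} itself): the identification $K_{U_\sigma}=-\sum_{\rho}D_\rho$ plus the support-function criterion for $\IQ$-Cartier invariant divisors gives the first equivalence; your pyramid/vertex argument for the equivalence with the unique-facet condition is sound (one minor point worth making explicit is the reduction to full-dimensional $\sigma$ — otherwise $U_\sigma$ splits off a torus factor and ``facet'' must be read inside the span of $\sigma$ — and you correctly use strong convexity to see that each $u_\rho$ is a vertex of $\Pi_\sigma$); and the identity $\Pi_\sigma=\sigma\cap\{v:\langle m,v\rangle\le 1\}$ correctly reduces canonicity to the stated lattice-point condition. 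Concerning the obstacle you flag at the end: you do not need the stronger fact that the infimum of discrepancies over all divisorial valuations is attained at toric ones. It suffices to combine (i) the existence of toric resolutions by subdividing $\sigma$, (ii) the fact — which this paper itself records immediately after its definition of canonical singularities — that nonnegativity of discrepancies on a single resolution implies it for every proper birational model, and (iii) for the converse direction, that any primitive $u\in\sigma\cap N\setminus\{0\}$ becomes a ray generator after the star subdivision of $\sigma$ at $u$, so that canonicity forces $\langle m,u\rangle-1\ge 0$; linearity then upgrades this from primitive to all nonzero lattice points. With these standard ingredients your argument is complete.
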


\begin{remark}\label{redtomax}
If $\Sigma=\{\sigma_{\alpha}\}_{\alpha}$ is a fan and $X_{\Sigma}$ is the associated toric variety, then $X_{\Sigma}$ has canonical singularities if and only if for every cone $\sigma\in\Sigma$ the variety $U_{\sigma}$ has canonical singularities.

We can further restrict ourselves to consider only maximal cones, in the sense that the variety $X_{\Sigma}$ is $\bbQ$-Gorenstein (respectively has canonical singularities) if and only if for every maximal cone $\sigma\in\Sigma$ the variety $U_{\sigma}$ is $\bbQ$-Gorenstein (respectively has canonical singularities).
\end{remark}

We focus now on the toric varieties appearing in the definition of the perfect cone compactification.
Let $L$ be a finitely generated free abelian group and $C\subset L\otimes \bbR$ an open convex cone that is self-adjoint.
We consider the set $K_{p}$=convex-hull$(\overline{C}\cap L\setminus\{0\})$ and we denote by $\Sigma_{p}$ the fan given by taking cones over the faces of $K_{p}$ and the trivial cone, that is:
\begin{align*}
\Sigma_{p}=\{\sigma \subset L_{\bbR} \mbox{ cone } | \mbox{ there is a face } F\subset K_{p} \mbox{ such that } \sigma=\bbR_{\geq 0} F\}.
\end{align*}

\begin{theorem} \label{toric varieties canonical}
The toric variety $X_{\Sigma_{p}}$ associated to the fan $\Sigma_{p}$ is $\bbQ$-Gorenstein and has canonical singularities.
\end{theorem}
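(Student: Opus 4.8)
The plan is to verify the two combinatorial criteria of Proposition \ref{toric} cone by cone, after reducing to the maximal cones of $\Sigma_p$ by Remark \ref{redtomax}. First I would record that $\overline{C}$ contains no line (a self-adjoint cone is pointed), so there is a linear functional strictly positive on $\overline{C}\setminus\{0\}$; this forces $0\notin K_p$ and exhibits $K_p$ as an unbounded convex polyhedron with recession cone $\overline{C}$. Since the perfect cone decomposition covers $C$ (the condition $C=\cup_\alpha(\sigma_\alpha\cap C)$ of Definition \ref{admissible decomposition}), the maximal cones of $\Sigma_p$ are exactly the full-dimensional cones $\sigma=\IR_{\geq 0}F$ spanned by the bounded facets $F$ of $K_p$, each of which is indeed full-dimensional because $0\notin\mathrm{aff}(F)$, while the cones over unbounded faces occur only as proper faces.

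The key preliminary step is to identify the primitive ray generators $u_\rho$ of such a $\sigma$ with the vertices of $F$. The extreme rays of $\IR_{\geq 0}F$ pass through the vertices of $K_p$ lying on $F$, so it suffices to prove that every vertex $v$ of $K_p$ is a primitive lattice vector. I would argue by contradiction: if $v=kw$ with $w\in L$ primitive and $k\geq 2$, then every positive integer multiple $jw$ lies in $\overline{C}\cap L\setminus\{0\}\subseteq K_p$, and in particular $v=\tfrac12 w+\tfrac12(2k-1)w$ exhibits $v$ as the midpoint of a segment joining two distinct points of $K_p$, contradicting that $v$ is an extreme point. Hence $u_\rho$ ranges exactly over the primitive vertices of $F$.

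With this in hand both criteria follow from the geometry of the supporting hyperplane of $F$. Since $F$ is a bounded facet spanned by lattice points, its affine hull is a rational hyperplane not through the origin, which I may write as $\{x:\langle m_F,x\rangle=1\}$ for a suitable $m_F\in M\otimes\IQ$ with $M=\mathrm{Hom}(L,\ZZ)$, and $K_p$ lies in the halfspace $\langle m_F,\cdot\rangle\geq 1$ (the opposite orientation is impossible, as $\langle m_F,\cdot\rangle$ is unbounded above on the recession cone $\overline{C}\supseteq F$). As $\langle m_F,u_\rho\rangle=1$ for every vertex $u_\rho$ of $F$, the element $m_\sigma:=m_F$ witnesses the $\IQ$-Gorenstein criterion, and the facet of $\Pi_\sigma$ not containing the origin is precisely $F$. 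For canonicity, let $p\in\Pi_\sigma\cap L\setminus\{0\}$; writing $p=\sum_\rho\lambda_\rho u_\rho$ with $\lambda_\rho\geq 0$ and $\sum_\rho\lambda_\rho\leq 1$ gives $\langle m_F,p\rangle=\sum_\rho\lambda_\rho\leq 1$, while $p\in\sigma\subseteq\overline{C}$ forces $p\in K_p$ and hence $\langle m_F,p\rangle\geq 1$. Thus $\langle m_F,p\rangle=1$ and $p\in K_p\cap\{\langle m_F,\cdot\rangle=1\}=F$, so every nonzero lattice point of $\Pi_\sigma$ lies on the distinguished facet, which is exactly the condition in Proposition \ref{toric}.

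The step I expect to be the main obstacle is the structural identification of the maximal cones with the bounded facets of $K_p$ together with the correct orientation of their supporting hyperplanes; once $K_p$ is understood as a polyhedron with recession cone $\overline{C}$ and $0\notin K_p$, the primitivity of the vertices and the sandwich estimate $1\leq\langle m_F,p\rangle\leq 1$ are the decisive and essentially formal inputs feeding Proposition \ref{toric}.
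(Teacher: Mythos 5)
Your proof is correct and takes essentially the same route as the paper's: reduce to the maximal cones $\sigma=\IR_{\geq 0}F$ over facets of $K_{p}$ via Remark \ref{redtomax}, orient the supporting hyperplane of $F$ away from the origin, and show that every nonzero lattice point of $\Pi_{\sigma}$ lies in $\overline{C}$ (hence in $K_{p}$, by its definition) and therefore on $F$, which gives both criteria of Proposition \ref{toric}. The differences are cosmetic: you additionally prove that the vertices of $K_{p}$ are primitive (identifying the $u_{\rho}$ with the vertices of $F$) and exhibit the functional $m_{F}$ explicitly, whereas the paper obtains both the $\IQ$-Gorenstein condition and canonicity at once by applying its lattice-point argument directly to all of $\Pi_{\sigma}\cap L\setminus\{0\}$, the $u_{\rho}$ included.
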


\begin{proof}
Thanks to Remark \ref{redtomax}, in order to prove the theorem it is enough to show that for every $\sigma\in(\Sigma_{p})_{max}$ the variety $T_{\sigma}$ is $\bbQ$-Gorenstein and has canonical singularities. Let $\sigma=\bbR_{\geq0}F$ be such a cone, where $F$ is a facet of $K_{p}$.
Let $H$ denote the supporting hyperplane of $F$, i.e. $F=H\cap K_{p}$, and let $H^-$ be the closed half-space such that $H^{-}\cap K_{p} = F$. The origin lies in the interior of $H^{-}$: since $\sigma$ is of maximal dimension, the origin does not lie on $H$. On the other hand, if $0$ is in the complement of  $H^{-}$, then $ \bbR_{>1}F\subseteq H^{-} $ and $ \bbR_{>1}F\subseteq \bbR_{>1}K_{p}\subseteq K_{p} $, which is absurd.

Recall the definition of the polytope $\Pi_{\sigma}:=$convex-hull$(\{0\}\cup\{u_{\rho}| \rho\in\sigma(1)\})$ and let $0\not=v\in L\cap\Pi_{\sigma} \subseteq L\cap \sigma \cap H^-$; there exists an $n\in \mathbb{N}$ such that $nv\in K_{p}\subseteq \overline{C}$.
Since $\overline{C}$ is a cone it follows that $v\in\overline{C}$; therefore $v\in L\cap\overline{C}\setminus\{0\}\subseteq K_{p}$. We conclude that $v\in K_{p}\cap H^-=F$ and, thanks to Proposition \ref{toric}, that $T_{\sigma}$ is $\bbQ$-Gorenstein and has canonical singularities.
\end{proof}


\section{Singularities of quotients of toric varieties}\label{QuotientsSings}
In this section we analyse quotients of toric varieties with canonical singularities by the action of finite groups acting via group homomorphisms of the torus.

The following is essentially \cite{GHS07}, Corollary~2.20. We include a proof to highlight the fact that the assumption of smoothness is not necessary.

\begin{lemma} \label{lemmaKlaus}
Let $N$ be a free abelian group and $\Sigma$ a fan of strongly convex rational polyhedral cones contained in $N_{\bbR}$; let $T\hookrightarrow X_{\Sigma}$ be the corresponding toric variety. Let $G\leq\mathrm{GL}(N)$ be a finite group acting on the variety $X_{\Sigma}$. Let $D$ be a divisor which is pointwise fixed by a nontrivial element $1\not=g\in G$, then $D$ is not a torus-invariant divisor, i.e. $D\cap T\not=\emptyset$.
\end{lemma}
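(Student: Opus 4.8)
The plan is to argue by contraposition: assuming that $D$ is a torus-invariant prime divisor fixed pointwise by $g$, I will show that $g$ must be the identity. By the orbit--cone correspondence a torus-invariant prime divisor has the form $D_\rho=\overline{O(\rho)}$ for a ray $\rho\in\Sigma(1)$, where the open dense torus orbit $O(\rho)\subseteq D_\rho$ is isomorphic to the torus $T_{N(\rho)}$ with $N(\rho)=N/\ZZ u_\rho$. If $D$ is reducible each irreducible component lying in the boundary is some $D_\rho$ and is itself pointwise fixed, so it suffices to treat one such component. First I would observe that, since $g$ fixes $D_\rho$ as a set, it permutes the torus-invariant divisors and hence fixes $\rho$; because $g\in\mathrm{GL}(N)$ preserves the lattice $N$ and sends primitive generators to primitive generators, this forces $g(u_\rho)=u_\rho$. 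In particular $g$ preserves $\ZZ u_\rho$ and descends to an automorphism $\bar g\in\mathrm{GL}(N(\rho))$.

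Next I would use that $g$ acts on $X_\Sigma$ compatibly with its action on the torus $T_N$ and fixes the distinguished point of $O(\rho)$ (this point is defined intrinsically from $\rho$ and the lattice, both preserved by $g$). Identifying $O(\rho)$ with $T_{N(\rho)}$ via this distinguished point, the induced action of $g$ is exactly the algebraic group automorphism determined by $\bar g$, namely $n\otimes z\mapsto \bar g(n)\otimes z$. Since $O(\rho)$ is dense in $D_\rho$ and $g$ fixes $D_\rho$ pointwise, $g$ fixes every point of the torus $T_{N(\rho)}$; but in torus coordinates this automorphism is a monomial map, which equals the identity morphism only when its defining matrix is trivial. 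Hence $\bar g=\mathrm{id}_{N(\rho)}$.

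Finally I would combine $\bar g=\mathrm{id}$ with $g(u_\rho)=u_\rho$. The first condition means $g(n)-n\in\ZZ u_\rho$ for every $n\in N$, so $g(n)=n+\phi(n)u_\rho$ for a homomorphism $\phi\colon N\to\ZZ$ with $\phi(u_\rho)=0$. Iterating and using $g(u_\rho)=u_\rho$ gives $g^k(n)=n+k\phi(n)u_\rho$; since $G$ is finite, $g$ has some finite order $m$, so $m\phi(n)=0$ for all $n$, and thus $\phi=0$ because $\ZZ$ is torsion-free. Therefore $g=\mathrm{id}$, contradicting $g\neq 1$, and we conclude $D\cap T\neq\emptyset$. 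The hard part will be this last step: the subtlety of the lemma is that $\bar g=\mathrm{id}$ alone does not force $g=\mathrm{id}$, since $g$ may a priori be a nontrivial transvection-type map of $N$; it is precisely the finiteness of $G$ together with the torsion-freeness of $\ZZ$ that rules this out, which is why no smoothness hypothesis is needed.
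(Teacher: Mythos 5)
Your proof is correct and follows essentially the same route as the paper's: assume $D=\overline{O(\rho)}$ is torus-invariant, deduce from the pointwise action on the orbit that $g$ fixes $u_\rho$ (ruling out $-\mathrm{id}$ on $\ZZ u_\rho$ via preservation of the cone) and acts trivially on $N/\ZZ u_\rho$, and conclude $g=\mathrm{id}$, a contradiction. If anything your write-up is more careful at the final step: the paper simply asserts ``it follows that $g$ acts as the identity on $N$,'' whereas you explicitly rule out the transvection-type maps $g(n)=n+\phi(n)u_\rho$ using the finite order of $g$ and torsion-freeness of $\ZZ$, which is precisely the justification that step needs.
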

\begin{proof}
\begin{sloppypar}
Looking for a contradiction, we assume that $D$ lies in the boundary. 

Let $M$ denote the dual of $N$. We can identify the torus $T$ with the group homomorphisms $T=\mathrm{Hom}(M,\bbC^*)\cong N\otimes \bbC^*$ and $D$ corresponds to a 1-dimensional cone in $\Sigma$, say $\rho$; in particular $D=\overline{(O(\rho))}$ where ${ O(\rho)=\{u:\rho^{\perp}\cap M\to \bbC^*| u }$ group homomorphism$\}$ is a torus orbit. Since $g$ acts trivially on the divisor $D$, it acts trivially on $\rho^{\perp}\cap M\cong \frac{N}{\rho\cap N}$. Therefore, we get the following short exact sequence of $\langle g\rangle$-equivariant group morphisms:
\begin{align*}
0\to \bbZ(\rho\cap N) \to N \to \frac{N}{\rho\cap N}\to 0.
\end{align*}
The action of $g$ on $\bbZ(\rho\cap N)$ can be just $\pm id$; nonetheless, since $g$ acts on $O(\rho)$, it acts on the toric affine variety $U_{\rho}$ associated to the cone $\rho$, hence $g$ acts as the identity on $\bbZ(\rho\cap N)$.  It follows that $g$ acts as the identity on $N$, which is a contradiction and the claim is shown.
\end{sloppypar}
\end{proof}

\begin{lemma}\label{ramification}
Let $T$ be a torus with associated 1-parameter subgroup lattice $N$. Let $T\hookrightarrow X$ be a toric variety acted on by a finite group $G<GL(N)$.
Then the ramification divisor of the quotient morphism $\pi\colon X\to X/G$ is given by
\begin{align*}
R(\pi) = \sum_{\substack{ r\in G \\ \textit{ reflection}} }  \mathrm{Fix}(r)
\end{align*}
where $\mathrm{Fix}(r)$ denotes the fixed locus of $r$ equipped with the reduced structure.
\end{lemma}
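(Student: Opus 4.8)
The plan is to start from the standard description of the ramification divisor of a finite quotient of normal varieties in characteristic zero. Since $\pi\colon T\hookrightarrow X\to X/G$ is finite and both source and target are normal, one has $R(\pi)=\sum_{D}(e_{D}-1)D$, where $D$ ranges over the prime divisors of $X$ and $e_{D}=|I_{D}|$ is the order of the inertia group $I_{D}=\{g\in G:\ D\subseteq\mathrm{Fix}(g)\}$. The whole task then reduces to determining $I_{D}$ for every prime divisor $D$ and to checking that it is nontrivial precisely when $D$ is a component of $\mathrm{Fix}(r)$ for a reflection $r$, with $e_{D}=2$ in that case.

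First I would fix a prime divisor $D$ with $I_{D}\neq\{1\}$ and pick $1\neq g\in I_{D}$. By Lemma \ref{lemmaKlaus} the divisor $D$ is not torus-invariant, so $D\cap T\neq\emptyset$ and $D\cap T$ is a codimension-one subset of $T$ fixed pointwise by $g$. Writing $T=N\otimes\IC^{*}$ and regarding $g\in\mathrm{GL}(N)$ as a group automorphism of $T$, the fixed locus $T^{g}$ is the kernel of the endomorphism $g-\mathrm{id}$ of $T$; hence $\dim T^{g}$ equals the multiplicity of the eigenvalue $1$ of $g$ acting on $N$. Containing a divisor forces $\dim T^{g}\geq\dim T-1$, and since $g\neq\mathrm{id}$ we get equality, so $g$ is a quasi-reflection.

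The crucial step, and the one I expect to be the main obstacle to phrase cleanly, is upgrading \emph{quasi-reflection} to \emph{reflection} by exploiting the integral structure. As $g$ is a quasi-reflection, its unique nontrivial eigenvalue $\zeta$ satisfies $\det(g)=\zeta$; but $g\in\mathrm{GL}(N)$ acts by an integer matrix of finite order, so $\det(g)$ is simultaneously an integer and a root of unity, hence $\pm1$. Since $\zeta\neq1$ we conclude $\zeta=-1$, i.e. $g$ is a reflection of order two. The same determinant argument then pins down $|I_{D}|$: if $r_{1},r_{2}\in I_{D}\setminus\{1\}$ were two reflections with $r_{1}r_{2}\neq\mathrm{id}$, then $r_{1}r_{2}\in I_{D}$ would again be a reflection, forcing $-1=\det(r_{1}r_{2})=\det(r_{1})\det(r_{2})=1$, a contradiction; hence $I_{D}=\{1,r_{D}\}\cong\ZZ/2$ and $e_{D}=2$.

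Finally I would reassemble. Every prime divisor $D$ with $I_{D}\neq\{1\}$ now contributes $(e_{D}-1)D=D$ to $R(\pi)$ and is a component of $\mathrm{Fix}(r_{D})$ for the uniquely determined reflection $r_{D}$; conversely each reflection $r\in G$ fixes $\overline{T^{r}}$, which by Lemma \ref{lemmaKlaus} is the whole divisorial part of $\mathrm{Fix}(r)$, its components being translates of the identity component of the codimension-one subgroup $T^{r}$. Grouping the prime divisors by their associated reflection, and noting that distinct reflections cannot share a component since $r_{D}$ is unique, gives $R(\pi)=\sum_{D:\,e_{D}=2}D=\sum_{r\ \mathrm{reflection}}\mathrm{Fix}(r)$, with each fixed locus taken reduced, as claimed. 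The only delicate bookkeeping here is that $T^{r}$ may be disconnected, so a single reflection can fix several prime divisors at once; this is harmless because the argument matches components, not reflections, to prime divisors.
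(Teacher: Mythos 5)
Your proof is correct and follows essentially the same route as the paper's: reduce to the open torus via Lemma \ref{lemmaKlaus}, show that any nontrivial $g$ pointwise fixing a divisor is a quasi-reflection (you via the dimension of the kernel of $g-\mathrm{id}$ acting on $T$, the paper via a translated tangent-space argument $m_x\colon T_eT\to T_xT$ at a point of $D$), and use the integrality of $g\in\mathrm{GL}(N)$ to force the nontrivial eigenvalue to be $-1$ (your determinant argument in place of the paper's trace argument). Your explicit inertia-group bookkeeping, showing $e_D=2$ and that no prime divisor can be fixed by two distinct reflections, is left implicit in the paper's proof and is a welcome completion, since it is what justifies the coefficients in the stated equality of divisors.
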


\begin{proof}
By Lemma \ref{lemmaKlaus}, no component of the ramification divisor is torus-invariant, hence if $R(\pi)$ has support on a prime divisor $D$, then $D = \overline{D\cap T}$ and we are left to prove the statement for the restriction $\pi|_{T}:T\to T/G$.

For this, we assume that $X= T$ is the torus and we consider a point $x\in D$ for a prime divisor $D$ fixed by an element $g\in G$. Multiplication by $x$ induces a $g$-equivariant morphism between the tangent spaces ${ m_{x}:T_{e}T\to T_{x}T }$. The tangent space to $D$ at $x$ is a $g$-invariant codimension 1 subspace $T_{x}D\subseteq T_{x}T$ and so is its preimage $m_{x}^{-1}(T_{x}D)\subseteq T_{e}T\simeq N\otimes\bbC$.
In particular, $g$ acts as the identity on $m_{x}^{-1}(T_{x}D)$, and is a quasi-reflection.
Let $\lambda$ be the unique non-trivial eigenvalue of $g$, since $g\in\mathrm{GL}(N)$, its trace $tr(g) = n-1+\lambda$ is an integer, so $g$ is a reflection and the claim is shown.
\end{proof}

As the next example shows, the quotient of a toric variety with canonical singularities by a finite group acting via automorphisms of the one parameter subgroup lattice need not be $\bbQ$-Gorenstein. In particular it seems that without more precise knowledge of the cones of the fan $\Sigma$ it is not possible to conclude that the perfect cone compactification  $\overline{\cF_L(\widetilde{O}^+(L))}^{p}$ has canonical singularities.

\begin{example}
Let $\sigma\subset \bbR^3 = N_{\bbR}$ be the cone over the polytope
\begin{align*}
P := \mbox{convex-hull}((-1,-1,1), (1,1,1), (0,2,1), (-1,1,1), (-1,-1,1)).
\end{align*}
The corresponding toric variety $T_{N} \hookrightarrow X_{\sigma}$ has, thanks to Proposition \ref{toric}, canonical singularities.
Let $r$ be the reflection fixing the coordinate plane $\{(0,y,z)\ :\ y,z\in \bbR\}$. The cone $\sigma$ is fixed by the action of $r$ on $N_{\bbR}$, therefore the group $G = \langle r\rangle$ generated by the reflection $r$ acts on the toric variety $X_{\sigma}$.
The pull-back formula in section 2.1 of \cite{Sho93} reads:
\begin{align*}
K_{X_{\sigma}} = f^*(K_{X_{\sigma}/G} + \frac{1}{2} f(R)) 
\end{align*}
where $R\in\mathrm{Div}(X_\sigma)$ is the ramification divisor of the quotient morphism $f\colon X_{\sigma}\to X_{\sigma}/G$.

Thanks to Corollary 2.2 of \cite{Sho93} it follows that $K_{X_{\sigma}/G}$ is $\bbQ$-Cartier if and only if $R$ is. Thanks to Proposition 4.1.2 of \cite{CLS} it can be computed that:
\begin{align*}
R = \mathrm{div}(x-x^{-1}) - \sum_{\rho_i\not= (0,2,1)} D_{\rho_i}
\end{align*}
where $D_{\rho_i}$ is the torus-invariant divisor corresponding to the ray $\rho_i\subseteq \sigma$. In particular neither $R$ nor $K_{X_{\sigma}/G}$  is $\bbQ$-Cartier.

The same argument carries over to many other polytopes that are symmetric with respect to an axis.
\end{example}

Even if the quotient might not be $\bbQ$-Gorenstein, we can prove a weaker statement about the singularities of quotients of toric varieties, for this we recall the notion of pair. We consider pairs $(X,B)$ where $X$ is a normal variety and $B:=\sum d_{i} D_{i}$ is a boundary, i.e. a $\bbQ$-divisor for which the coefficient of each prime divisor $D_{i}$ satisfies $0\leq d_{i}\leq1$.
If $K_{X}+B$ is $\bbQ$-Cartier, its pullback along any morphism is well-defined and for a birational morphism $f\colon Y\to X$ with $Y$ smooth, if $E_{i}$ denote the exceptional divisors, we have the equality
\begin{align*}
K_{Y}  = f^{*}(K_{X} + B) - f_{*}^{-1}(B) + \sum a_{i}E_{i}
\end{align*}
for some rational numbers $a_{i} = a(E_{i}, X, B)$. The coefficient $a_{i}$ is independent of the model $Y$, but it depends only on the centre of $E_{i}$ (\cite{Mat02}, Proposition-Definition 4-4-1), it is called discrepancy of $(X,B)$ at $E_{i}$.
We further define the discrepancy of $(X,B)$ to be:

\begin{flushright}
disc(X,B):=inf\{ $a(E,X,B)$: $E$ corresponds to a discrete valuation of $k(X)$ such that Center$_{X}(E)\not=\emptyset$ and codim$_{X}(\mathrm{Center}_{X}(E))\geq 2$\}.
\end{flushright}

\begin{definition}\label{Def-klt}
The pair $(X,B)$ with $B=\sum d_{i}D_{i}$ is said to have
\begin{itemize}
\item canonical singularities if $K_{X}+B$ is $\bbQ$-Cartier and disc$(X,B)\geq 0$;

\item Kawamata log terminal (klt) singularities if $K_{X}+B$ is $\bbQ$-Cartier, $d_{i}<1$ for any $i$ and disc$(X,B)>-1$.

\end{itemize}
\end{definition}

\begin{remark}
The variety $X$ has canonical singularities if and only if the pair $(X,0)$ has canonical singularities.
\end{remark}

\begin{proposition} \label{klt-toric}
Let $T$ be a torus with associated 1-parameter subgroup lattice $N$. Let $T\hookrightarrow X$ be a toric variety with canonical singularities acted on by a finite group $G<GL(N)$. If $\pi\colon X\to X/G$ denotes the quotient morphism and $R$ the ramification divisor, then the pair $(X/G, \frac{1}{2}\pi(R))$ has klt singularities.
\end{proposition}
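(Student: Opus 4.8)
The plan is to deduce the klt property of the pair on $X/G$ from the singularities of $X$ itself, using the crepant ramification formula for the finite quotient $\pi$ together with the elementary fact that a variety with canonical singularities, viewed as a pair with empty boundary, is already klt. Concretely I would verify, in turn, the three requirements in the definition of a klt pair for $\left(X/G,\frac{1}{2}\pi(R)\right)$: the $\IQ$-Cartierness of $K_{X/G}+\frac{1}{2}\pi(R)$, the bound $d_i<1$ on the boundary coefficients, and $\mathrm{disc}>-1$.

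First I would collect the ramification data. By Lemma \ref{ramification} the ramification divisor is $R=\sum_{r}\mathrm{Fix}(r)$, the sum running over the reflections $r\in G$, and each such $r$ has order $2$; hence every prime component of $R$ lies over the reduced branch divisor $\pi(R)$ with ramification index exactly $2$. Feeding these uniform ramification indices into Shokurov's formula for quotients (section 2.1 of \cite{Sho93}, already used in the Example above) produces the coefficient $1-\tfrac{1}{2}=\tfrac{1}{2}$ and the crepant identity
\begin{align*}
K_X=\pi^{*}\!\left(K_{X/G}+\tfrac{1}{2}\pi(R)\right).
\end{align*}
Since $X$ has canonical, hence $\IQ$-Gorenstein, singularities, $K_X$ is $\IQ$-Cartier; as $\pi$ is finite, Corollary 2.2 of \cite{Sho93} then guarantees that the log canonical divisor $K_{X/G}+\frac{1}{2}\pi(R)$ is $\IQ$-Cartier, so the pair is well defined. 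This is the delicate point of the argument: as the Example demonstrates, $K_{X/G}$ on its own may fail to be $\IQ$-Cartier, and it is exactly the boundary $\frac{1}{2}\pi(R)$, whose coefficient matches the common ramification index, that restores $\IQ$-Cartierness.

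With the pair defined, the coefficient condition is immediate, since every coefficient of $\frac{1}{2}\pi(R)$ equals $\frac{1}{2}<1$. For the discrepancy bound I would compare discrepancies across $\pi$. Let $E$ be a divisor over $X/G$ whose centre has codimension at least two; choose a divisor $E'$ over $X$ dominating $E$, and let $\rho\geq 1$ be the ramification index of $\pi$ along $E'$. Because $\pi$ is finite the centre of $E'$ in $X$ also has codimension at least two, and since the identity above is crepant (the boundary upstairs is zero) the transformation rule for log discrepancies under finite dominant morphisms (Proposition 5.20 of \cite{KM}) gives
\begin{align*}
1+a(E',X,0)=\rho\,\bigl(1+a(E,X/G,\tfrac{1}{2}\pi(R))\bigr).
\end{align*}

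Finally I would invoke the hypothesis that $X$ has canonical singularities: this means $a(E',X,0)\geq 0$, so the left-hand side is at least $1$, and as $\rho\geq 1$ we obtain $1+a(E,X/G,\frac{1}{2}\pi(R))>0$, that is $a(E,X/G,\frac{1}{2}\pi(R))>-1$. Taking the infimum over all such $E$ yields $\mathrm{disc}(X/G,\frac{1}{2}\pi(R))>-1$, which together with the previous two points shows that $\left(X/G,\frac{1}{2}\pi(R)\right)$ is klt. The step I expect to require the most care is the $\IQ$-Cartierness of $K_{X/G}+\frac{1}{2}\pi(R)$, precisely because $K_{X/G}$ alone generally is not; everything else is a formal consequence of the crepant ramification formula and the canonicity of $X$.
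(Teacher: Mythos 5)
Your proof is correct and follows essentially the same route as the paper: after invoking Lemma \ref{ramification} to see that $R$ consists of reflection divisors with ramification index $2$, the paper simply cites Lemma 3.16 of \cite{Kol96} (equivalently, a klt version of Corollary 2.2 of \cite{Sho93}), whose content is exactly what you unpack by hand --- the crepant identity $K_X=\pi^*\bigl(K_{X/G}+\tfrac{1}{2}\pi(R)\bigr)$, descent of $\IQ$-Cartierness along the finite map, and the log-discrepancy transformation rule of Proposition 5.20 of \cite{KM}. Your expanded version, including the observation that the boundary $\tfrac{1}{2}\pi(R)$ is what restores $\IQ$-Cartierness in light of the paper's Example, is a faithful elaboration of the cited black box rather than a different argument.
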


\begin{proof}
Thanks to Lemma \ref{ramification} the ramification divisor equals $\sum_r D_r$, where the sum runs over all the reflections of $G$ and $D_r$ denotes the divisor pointwise fixed by the reflection $r$.
The claim now follows from Lemma 3.16 of \cite{Kol96} or from generalising word by word the proof of Corollary 2.2 of \cite{Sho93} to the case of klt singularities.
\end{proof}

\section*{Declarations}

\subsection*{Funding} The author was partially supported by the DFG through the research grant Le 3093/3-1

\subsection*{Conflicts of interest} The author declares no competing financial interests.

\subsection*{Data availability}
Data sharing not applicable to this article as no datasets were generated or analysed
during the current study.

\subsection*{Code availability} Not applicable.

\providecommand{\bysame}{\leavevmode\hbox to3em{\hrulefill}\thinspace}
\providecommand{\MR}{\relax\ifhmode\unskip\space\fi MR }
\providecommand{\MRhref}[2]{%
  \href{http://www.ams.org/mathscinet-getitem?mr=#1}{#2}
}
\providecommand{\href}[2]{#2}


\end{document}